\renewcommand{\S}{\Sigma}
\newtheorem{theorem}{Theorem}
\newtheorem{proposition}[theorem]{Proposition}
\newtheorem{lemma}[theorem]{Lemma}
\newtheorem{claim}[theorem]{Claim}
\theoremstyle{definition}
\newtheorem{definition}[theorem]{Definition}
\theoremstyle{plain}
\newcommand{\vt}{\vspace{.1cm}}
\newcommand{\R}{\mathbb{R} }
\newcommand{\N}{\mathbb{N} }
\newcommand{\h}{\mathbb{H} }
\renewcommand{\rho}{\varrho}
\renewcommand{\Theta}{\varTheta}
\renewcommand{\Lambda}{\varLambda}
\renewcommand{\Sigma}{\varSigma}
\renewcommand{\tau}{\uptau}
\newcommand{\overbar}[1]{\mkern 1.5mu\overline{\mkern-1.5mu#1\mkern-1.5mu}\mkern 1.5mu}
\newcommand{\tpitchfork}{%
 \vbox{
 \baselineskip\z@skip
 \lineskip-.52ex
 \lineskiplimit\maxdimen
 \m@th
 \ialign{##\crcr\hidewidth\smash{$-$}\hidewidth\crcr$\pitchfork$\crcr}
 }%
}
\begin{document}

\title[Rotators-Translators to Mean Curvature Flow]{Rotators-Translators to \\ Mean Curvature Flow  in  $\h^2\times\R$}
\author{R. F. de Lima \and A. K. Ramos \and J. P. dos Santos}
\address[A1]{Departamento de Matem\'atica - UFRN}
\email{ronaldo.freire@ufrn.br}
\address[A2]{Departamento de Matemática Pura e Aplicada - UFRGS}
\email{alvaro.ramos@ufrgs.br}
\address[A3]{Departamento de Matem\'atica - UnB}
\email{joaopsantos@unb.br}
\thanks{The second and third authors were partially supported by the National Council for
Scientific and Technological Development – CNPq}
\maketitle

\begin{abstract}
  We establish the existence of one-parameter families of helicoidal surfaces
  of $\mathbb H^2\times\mathbb R$ which, under mean curvature flow,
  simultaneously rotate about a vertical axis and translate vertically.

  \vspace{.15cm}
\noindent{\it 2020 Mathematics Subject Classification:} 53E10 (primary), 53E99 (secondary).

\vspace{.1cm}

\noindent{\it Key words and phrases:} rotator -- translator -- mean curvature flow.
\end{abstract}

\section{Introduction}
Given an orientable Riemannian $3$-manifold $\overbar M$, let
$\mathcal G:=\{\Gamma_t\,;\, t\in {\R}\}$ 
be a  one-parameter subgroup of its group of isometries, 
and denote by $\xi$ the Killing field determined by $\mathcal G$.
In this setting, a surface
$\Sigma$ of $\overbar M$ with unit normal $\eta$ and mean curvature $H$
is called a $\mathcal G$-\emph{soliton} to mean curvature flow
if the equality
\begin{equation} \label{eq-main}
H=\langle\xi,\eta\rangle
\end{equation}
holds everywhere on $\Sigma$. It is well known that,
under mean curvature flow (MCF), a $\mathcal G$-soliton $\Sigma$
moves by the actions of the isometries of $\mathcal G$ (see, e.g.,~\cite{hungerbuhler-smoczyk}).
More precisely, if $X_t\colon M\to\overbar M$, $t\in[0,T)$, is
the MCF in $\overbar M$ whose initial condition $\S:=X_0(M)$
is a $\mathcal G$-soliton, then $X_t(M)=\Gamma_t(\S)$ for all $t\in[0,T)$.

The most considered $\mathcal G$-solitons in Euclidean space $\R^3$, called
\emph{translators}, are those whose associated group $\mathcal G$ of isometries
consists of translations in a fixed direction.
There are also the \emph{rotators}, which are those whose
associated group $\mathcal G$ consists of rotations of $\R^3$ about a fixed axis.
In~\cite{halldorsson}, Halldorsson obtained one-parameter families of helicoidal rotators in $\R^3$,  which are also
translators.

In this note, inspired by
Halldorsson's work, {for each $h>0$}, we establish the existence of {a} one-parameter family of
helicoidal rotators-translators to MCF in $\h^2\times\R$ {of pitch $h$}
(see Definition~\ref{def-helicoidal} in Section~\ref{sec-proof}), where
$\h^2$ is the hyperbolic plane.
The results read as follows.

\begin{theorem} \label{th-main}
For any $h>0,$ there exists a one-parameter family of complete 
rotators to {\rm MCF} in $\h^2\times\R$ which are helicoidal surfaces of pitch $h.$ For each such surface,
the trace of the generating curve in $\h^2$ consists of two unbounded properly embedded arms
centered at a point $o\in\h^2$, with
each arm spiraling around  $o$ (Fig.~{\rm\ref{fig-main}}).
\end{theorem}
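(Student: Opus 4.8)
The plan is to reduce the soliton PDE to a geodesic problem on a two-dimensional orbit space and then carry out a global ODE analysis. I would work in cylindrical coordinates $(\rho,\theta,t)$ on $\h^2\times\R$, in which the metric reads $d\rho^2+\sinh^2\!\rho\,d\theta^2+dt^2$, the point $o$ is $\{\rho=0\}$, and the rotational Killing field is $\xi=\partial_t\cdot 0+\partial_\theta$, with $|\xi|^2=\sinh^2\!\rho$. A helicoidal surface of pitch $h$ is one invariant under the screw motion generated by $\zeta=\partial_\theta+h\,\partial_t$; parametrize it by $X(s,\alpha)=(\rho(s),\alpha,w(s)+h\alpha)$, where $(\rho(s),w(s))$ is a generating curve and $w:=t-h\theta$ is the screw-invariant height. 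The first observation is that, since $\zeta$ is tangent to $\Sigma$, one has $\langle\zeta,\eta\rangle=0$, whence
\[
\langle\xi,\eta\rangle=\langle\zeta-h\,\partial_t,\eta\rangle=-h\,\langle\partial_t,\eta\rangle .
\]
Thus \eqref{eq-main} becomes $H=-h\langle\partial_t,\eta\rangle$: a helicoidal rotator of pitch $h$ is the same as a helicoidal \emph{translator} with vertical velocity $-h\,\partial_t$, which accounts for the simultaneous rotation and vertical translation under {\rm MCF}. In particular such a surface is $f$-minimal for the density $e^{-ht}$, since $-h\,\partial_t=\nabla(-ht)$.

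Second, I would perform the equivariant reduction. Integrating the weighted area $\int_\Sigma e^{-ht}\,dA$ over the screw parameter $\alpha$ on a fundamental domain factors out a positive constant and leaves the weighted-length functional $\int e^{-hw}\sqrt{\sinh^2\!\rho+h^2}\;ds_{\bar g}$ for the generating curve, where $\bar g=d\rho^2+\frac{\sinh^2\rho}{\sinh^2\rho+h^2}\,dw^2$ is the quotient metric on the orbit space $Q=\{(\rho,w):\rho>0\}$. Hence the generating curves of helicoidal rotators of pitch $h$ are exactly the geodesics of the conformal metric
\[
\hat g=e^{-2hw}\big[(\sinh^2\!\rho+h^2)\,d\rho^2+\sinh^2\!\rho\,\,dw^2\big]
\]
on $Q$ (equivalently, one derives the same second-order ODE directly from the first and second fundamental forms of $X$). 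Local existence of geodesics through every point and in every direction is then immediate from the Picard--Lindel\"of theorem, which already produces an abundance of solutions.

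The substantive work — and the main obstacle — is the global analysis of these geodesics, needed to extract complete, properly embedded surfaces with the stated two-armed spiraling profile. Since $\hat g$ carries no Killing field, only the homothety $w\mapsto w+c$ (under which $\hat g\mapsto e^{-2hc}\hat g$, so unparametrized geodesics are preserved), there is no Clairaut integral; instead I would combine conservation of energy $\tfrac12 e^{-2hw}\big((\sinh^2\!\rho+h^2)\dot\rho^2+\sinh^2\!\rho\,\dot w^2\big)\equiv\tfrac12$ with the homothety to eliminate the additive $w$-degree of freedom and reduce the geodesic system to a single first-order (phase-plane) ODE for the tangent direction as a function of $\rho$. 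A phase-plane/shooting analysis of this ODE, controlling the asymptotics as $\rho\to\infty$ (where $\hat g$ is asymptotically conformally flat with factor $\sim e^{2(\rho-hw)}$), should show that each such geodesic has a unique point of closest approach $\rho=\rho_0$ to the axis and is symmetric about it, so that it splits into two arms along which $\rho$ increases monotonically to $+\infty$.

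Finally, I would verify the three qualitative claims. Completeness of $\Sigma$ follows from showing the generating geodesic has infinite $\bar g$-length on each arm as $\rho\to\infty$. The spiraling is the assertion that the slice $\{t=0\}$ of $\Sigma$, namely the curve $\rho=\rho(s),\ \theta=-w(s)/h$ in $\h^2$, winds infinitely about $o$; this amounts to proving $w\to\mp\infty$, equivalently $\theta\to\pm\infty$, as one runs out along each arm, which I would obtain from the energy relation together with the growth of $\sinh^2\!\rho$. Proper embeddedness then comes from the monotonicity of $\rho$ along each arm (the two arms meet only at $\rho_0$ and are otherwise graphical over $\rho$), the monotone winding, and the screw invariance. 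Parametrizing by $\rho_0\in(0,\infty)$ — the invariant left after quotienting by the $w$-translations induced by ambient rotations and vertical translations — yields, for each fixed $h>0$, the desired one-parameter family. I expect the delicate point throughout to be the control of the winding and of embeddedness at infinity, precisely because the reduced metric is only homothety-, not isometry-, invariant.
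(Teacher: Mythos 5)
Your reduction is sound as far as it goes, and it is genuinely different from the paper's: you pass to the orbit space of the screw motion and characterize generating curves as geodesics of the conformal metric $e^{-2hw}\bigl[(\sinh^2\rho+h^2)\,d\rho^2+\sinh^2\rho\,dw^2\bigr]$, whereas the paper works in the hyperboloid model and reduces everything to the explicit autonomous planar system \eqref{eq-system007} for the support-type quantities $\tau=\langle\alpha,T\rangle$, $\mu=\langle\alpha,N\rangle$ of the projected plane curve. Your opening observation --- that tangency of the screw field $\zeta=\partial_\theta+h\partial_t$ forces $\langle\xi,\eta\rangle=-h\langle\partial_t,\eta\rangle$, so rotator and translator conditions coincide for helicoidal surfaces --- is exactly Theorem~\ref{th-translator}, obtained more conceptually than by the normal-vector computation in the paper, and the weighted-geodesic formulation is an attractive alternative starting point.

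The genuine gap is that everything after the reduction is announced rather than proved, and that is precisely where the content of the theorem lies. The assertions that each geodesic has a unique point of closest approach to the axis, that $\rho$ increases monotonically to $+\infty$ along each arm, that the angle winds infinitely, and that each arm has infinite length are carried in your write-up by ``should show'', ``I would obtain'', and ``I expect the delicate point to be''; in the paper these occupy the entire chain of Claims~\ref{claim-noconstantsolutionsH2xR}--\ref{claim-infiniteangle}, each requiring a specific argument (e.g.\ the sign analysis of $\tau'$ at zeros of $\tau$, the exclusion of oscillation of $\tau$ via the limit $k\mu\to h^2-1$, and the exclusion of $\mu\to+\infty$ as $s\to+\infty$). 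Two concrete points where your sketch would not survive as stated: first, the claimed symmetry of each geodesic about its closest point is unjustified and in general false --- the relevant symmetry $(\tau,\mu)(s)\mapsto-(\tau,\mu)(-s)$ permutes solutions of \eqref{eq-system007} rather than fixing each one, and your energy identity does not force pointwise symmetry. Second, infinite winding does not follow merely from ``the energy relation together with the growth of $\sinh^2\rho$'': since $\omega'=-\mu/r^2$ decays, one needs a quantitative comparison such as the boundedness of $-\tau/\mu$ (Claim~\ref{claim-tau/mulimited}) fed into the logarithmic test of Claim~\ref{claim-infiniteangle}; without it the total turning could a priori be finite even though $\rho\to\infty$. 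So the proposal is a correct and interesting reformulation plus a plan, but not a proof.
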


\begin{theorem} \label{th-translator}
Let $\Sigma=X(\R^2)$ be a helicoidal surface of pitch $h>0$ in $\h^2\times\R.$
Then, $\Sigma$ is a rotator to {\rm MCF}
if and only if $\S$ is a translator to {\rm MCF} with respect to the Killing field $\xi=-h\partial_t$, where
$\partial_t$ is the gradient of the height function of $\h^2\times\R$.
\end{theorem}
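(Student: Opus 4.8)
The plan is to read off the equivalence from the fact that a pitch-$h$ helicoidal surface carries, by its very construction, a tangent Killing field which is a fixed linear combination of the rotational field and the vertical field $\partial_t$. Let $\zeta$ denote the Killing field generating the rotations about the vertical axis through the center $o$ (so that the rotator equation is $H=\langle\zeta,\eta\rangle$), and let $\xi_h$ be the Killing field generating the helicoidal group of pitch $h$ whose orbits foliate $\Sigma$. The first step is to record, directly from Definition~\ref{def-helicoidal}, the decomposition
\[
\xi_h=\zeta+h\,\partial_t,
\]
which merely expresses that the helicoidal motion rotates and simultaneously translates vertically at the rate dictated by the pitch.

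Since $\Sigma=X(\R^2)$ is helicoidal of pitch $h$, it is invariant under the flow of $\xi_h$; equivalently, $\Sigma$ is foliated by the orbits of this flow, so $\xi_h$ is everywhere tangent to $\Sigma$. Pairing the displayed decomposition with the unit normal $\eta$ therefore gives
\[
0=\langle\xi_h,\eta\rangle=\langle\zeta,\eta\rangle+h\,\langle\partial_t,\eta\rangle,
\]
and hence the pointwise identity $\langle\zeta,\eta\rangle=\langle-h\,\partial_t,\eta\rangle$ on all of $\Sigma$.

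With this identity in hand the conclusion is immediate. The mean curvature $H$ and the normal $\eta$ are intrinsic to $\Sigma$ and do not depend on which soliton equation we test; the rotator condition is $H=\langle\zeta,\eta\rangle$, while the condition of being a translator with respect to $\xi=-h\,\partial_t$ is $H=\langle-h\,\partial_t,\eta\rangle$. Since the two right-hand sides coincide at every point, one equation holds precisely when the other does, which is the asserted equivalence. Reversing the choice of $\eta$ flips the sign of both sides of each equation simultaneously, so the statement is independent of the orientation.

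I expect the only genuine care to lie in the first step: one must fix the conventions of Definition~\ref{def-helicoidal} so that the rotational Killing field entering the rotator equation is exactly the $\zeta$ appearing in the helicoidal decomposition, and so that the pitch $h$ produces the vertical factor $-h\,\partial_t$ rather than $+h\,\partial_t$ or a rescaled multiple. Once the normalization is pinned down, the remaining argument is the one-line computation above.
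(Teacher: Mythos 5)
Your argument is correct, and it takes a genuinely different route from the paper's. The paper proves the equivalence by brute force: it computes $\langle -h\partial_t,\eta\rangle=-h\rho c$ from the explicit unit normal~\eqref{eq-normal}, uses $c=-\phi'/h$ and $\phi\phi'=\tau$ to rewrite this as $\rho\tau/\phi$, and then observes that this is exactly the right-hand side of the rotator condition in Lemma~\ref{lem-conditionrotatorH2xR}. You instead exploit the structural fact that the helicoidal Killing field $\xi_h=\zeta+h\partial_t$ is tangent to $\Sigma$; in the paper's notation this is literally the identity $X_v=(e^{vJ}J\alpha,0,h)=\zeta(X)+h\partial_t$ from~\eqref{eq-xuxv}, so $0=\langle X_v,\eta\rangle$ gives $\langle\zeta,\eta\rangle=\langle-h\partial_t,\eta\rangle$ pointwise with no computation of $\eta$ at all. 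The normalization worry you flag in your last paragraph is resolved by that same displayed formula for $X_v$: the decomposition holds on the nose, with the pitch $h$ producing exactly the coefficient $+h$ on $\partial_t$ and hence the translator field $-h\partial_t$ after moving it across the inner product. What your approach buys is brevity, independence from the orientation and from the particular formula for $\eta$, and generality --- the same one-line argument shows that a surface invariant under a one-parameter group generated by a sum of two Killing fields is a soliton for one summand if and only if it is a soliton for the negative of the other, in any ambient space. What the paper's approach buys is that it stays entirely inside the computational framework already built for Theorem~\ref{th-main} and reconfirms the explicit expression $H=\rho\tau/\phi$ that drives the ODE analysis there.
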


\begin{figure}[h]
\centering
\includegraphics[width=0.48\textwidth]{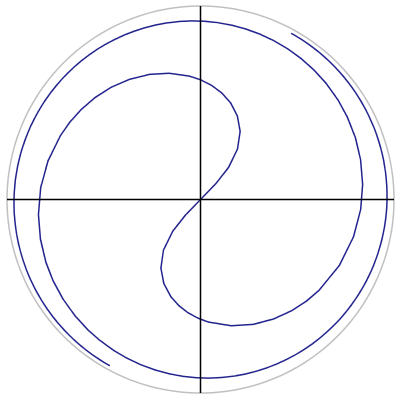}
\includegraphics[width=0.48\textwidth]{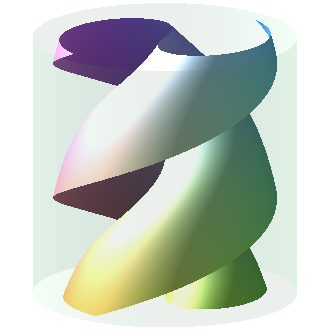}
\caption{\small A $1$-pitched helicoidal rotator-translator in $\h^2\times\R$ (right) and its
generating curve in the Poincaré {disk} $\h^2$ (left).}
\label{fig-main}
\end{figure}

It should be mentioned that helicoidal rotators-translators to MCF have been considered in other ambient $3$-spaces,
such as the Heisenberg space ${\rm Nil}_3$ (cf.~\cite{pipoli}), and the hyperbolic
space $\h^3$ (cf.~\cite{dLRS}).

\section{Proof of Theorems~\ref{th-main} and~\ref{th-translator}} \label{sec-proof}

Let $\mathbb L^3=(\R^3,\langle \,, \rangle)$ be the Lorentzian space,
where
\[
\langle \,,\, \rangle=ds^2:=dx_1^2+ds_2^2-dx_3^2,
\]
and consider the hyperbolic plane $\h^2\subset\mathbb L^3$ as
\[
\h^2=\{p\in \mathbb L^3\,;\, \langle p,p\rangle=-1, \,\, \langle p,e_3\rangle <0\}
\]
with the induced Lorentzian metric, where $e_3=(0,0,1)\in\mathbb L^3.$
In this model, the tangent plane of $\h^2$ at $p\in\h^2$ is the subspace
$\{p\}^\perp$ of $\mathbb L^3,$ that is,
\[
T_p\h^2=\{w\in\mathbb L^3\,;\, \langle w,p\rangle=0\}.
\]

Consider the product $\h^2\times\R$ endowed with its standard metric
\[
\langle \,,\,\rangle_{\h^2}+dt^2,
\]
together with its standard embedding in $\mathbb L^3\times\R.$
The rotators to MCF we shall consider are those defined by
the one-parameter group  $\mathcal G=\{\Gamma_t\,;\,t\in\R\}$
of rotations of $\h^2\times\R$ about
the axis $\ell:={\{ (0,0,1) \}}\times\R\subset\h^2\times\R,$ that is,
\[
\Gamma_t=\left[
 \begin{array}{ccc}
 e^{tJ} & & \\
 & 1 & \\
 & & 1
 \end{array}
\right],
\]
where $J(x_1,x_2)=(-x_2,x_1)$, $(x_1,x_2)\in\R^2$.
The Killing field associated to $\mathcal G$ is
\[
\xi(p):=\frac{\partial\Gamma_t}{\partial t}|_{t=0}(p)=J\pi(p), \,\,\, p\in\h^2\times\R,
\]
where $\pi(p)$ denotes the projection of $p$ over $\R^2\times{\{(0,0,0)\}}\subset\mathbb L^3.$

Therefore, considering equality~\eqref{eq-main}, we have that an oriented surface $\Sigma$ of $\h^2\times\R$
with unit normal $\eta$ is a rotator to MCF (with rotation axis $\ell$) if and
only if its mean curvature function $H$ satisfies
\begin{equation} \label{eq-rotatorH2xR}
H(p)=\langle J\pi(p),\eta(p)\rangle \,\,\,\forall p\in\Sigma.
\end{equation}

{In order to}
introduce the helicoidal surfaces of
$\h^2\times\R$ with axis $\ell$,
consider a differentiable curve $\sigma:I\subset\R\rightarrow\h^2$,
{written as}
\begin{equation}\label{eqsigmacoord}
\sigma=(\alpha,\phi),
\end{equation}
where $\alpha\colon I\rightarrow\R^2$ is a regular curve parameterized by
{arc length}, and $\phi$ is a differentiable function on the open interval $I.$ Notice that we are identifying
the plane $\{e_3\}^\perp\subset\mathbb L^3$ with the Euclidean plane $\R^2$,
so that the local theory of plane curves in $\R^2$ applies to $\alpha.$
Keeping that in mind, set
\[
T=\alpha' \quad\text{and}\quad N=JT,
\]
and recall that the curvature of $\alpha$ is the function
$k:=\langle\alpha'',N\rangle,$
which satisfies the \emph{Frenet equations} $T'=kN$ {and} $N'=-kT$.
We use $T$ and $N$ to define the following functions,
which will play a fundamental role in the sequel.
\begin{equation} \label{eq-tau&mu}
\tau:=\langle\alpha,T\rangle \quad\text{and}\quad \mu:=\langle\alpha,N\rangle.
\end{equation}
From the Frenet equations, the derivatives of $\tau$ and $\mu$ satisfy:
\begin{equation} \label{eq-tau'&mu'}
\tau'=1+k\mu \quad\text{and}\quad \mu'=-k\tau.
\end{equation}

\begin{definition} \label{def-helicoidal}
Given  $h>0$ and a differentiable curve
$\sigma=(\alpha,\phi)$ in $\h^2,$ we call a parameterized surface
$\Sigma=X(\R^2)\subset\h^2\times\R$ a \emph{helicoidal} surface generated by $\sigma$ {(as in~\eqref{eqsigmacoord})} with \emph{pitch} $h,$
if the parameterization $X:\R^2\rightarrow\h^2\times\R$ writes as
\begin{equation} \label{eq-parametrization}
X(u,v)=(e^{vJ}\alpha(u),\phi(u),hv), \,\,\,(u,v)\in\R^2.
\end{equation}
\end{definition}

We proceed now by determining  the mean curvature function of
a helicoidal surface in terms of its pitch $h$ and the functions
$\tau$ and $\mu$ defined in~\eqref{eq-tau&mu}. For that, we will use
the {well-known} formula that {expresses} $H$ with respect to the coefficients of
the first and second fundamental forms; namely
\[
H=\frac{Eg-2fF+Ge}{2(EG-F^2)}\cdot
\]

For a parameterization $X$ as in~\eqref{eq-parametrization}, we have
\begin{equation} \label{eq-xuxv}
\frac{\partial X}{\partial u}(u,v)=(e^{vJ}T(u),\phi'(u),0)\quad\text{and}\quad
\frac{\partial X}{\partial v}(u,v)=(e^{vJ}J\alpha(u),0,h).
\end{equation}
Besides,  since $\sigma$ satisfies $\langle \sigma,\sigma\rangle=-1$, we
have that $\langle \sigma,\sigma'\rangle=0$. Thus,
setting $r^2:=\tau^2+\mu^2$, the following equalities hold true.
\begin{equation}\label{eq-phiandphi'}
\phi^2=1+r^2 \quad\text{and}\quad \phi\phi'=\tau.
\end{equation}

Therefore, the coefficients of the first fundamental form of $X$ are
\[
E=\frac{1+\mu^2}{\phi^2}, \quad F=-\mu, \quad \text{and} \quad G=r^2+h^2.
\]
Also, it is easily seen that a unit normal to $\Sigma=X(\R^2)$ is
\begin{equation} \label{eq-normal}
\eta:=\rho(e^{vJ}(aT+bN),\mu, c), \quad \rho=(a^2+b^2-\mu^2+c^2)^{-1/2},
\end{equation}
where $a, b,$ and $c$ are the following functions of $u,$
\begin{equation} \label{eq-abc}
a := \mu\phi', \quad
b := \frac{1+\mu^2}{\phi},\quad\text{and}\quad
c := -\frac{\phi'}{h}\cdot
\end{equation}

Regarding the second derivatives of $X,$ we have
\begin{equation}\label{eq-xuu}
 \begin{aligned}
 X_{uu}(u,v) &= (e^{vJ}k(u)N(u),\phi''(u), 0),\\[1ex]
 X_{uv}(u,v) &= (e^{vJ}N(u),0, 0),\\[1ex]
 X_{vv}(u,v) &= (-e^{vJ}\alpha(u),0, 0),
 \end{aligned}
\end{equation}
where $k$ is the curvature function of $\alpha.$
Therefore, the coefficients of the second fundamental form of $X$ are
\[
e=\rho(bk-\phi''\mu), \quad f=\rho b, \quad\text{and}\quad
g=-\rho(a\tau+b\mu),
\]
which implies that the mean curvature $H$ of $\Sigma$ at $X(u,v)$ is
\begin{equation} \label{eq-H}
H=\rho\frac{\phi^2(bk-\phi''\mu)(h^2+r^2)+2b\mu\phi^2-(a\tau+b\mu)(1+\mu^2)}{{2}(\tau^2+h^2(1+\mu^2))}\cdot
\end{equation}

By differentiating the second equality in \eqref{eq-phiandphi'}, one gets
\[
\phi''=\frac{1}{\phi}\left(1+k\mu-\frac{\tau^2}{\phi^2}\right)=\frac{1}{\phi}\left(k\mu+\frac{1+\mu^2}{\phi^2}\right).
\]
Hence, from \eqref{eq-abc}, we have
\begin{eqnarray} \label{eq-bk-phi''mu}
 bk-\phi''\mu &=& \frac{(1+\mu^2)}{\phi}k-\frac{\mu}{\phi}\left(k\mu+\frac{1+\mu^2}{\phi^2}\right) \nonumber\\
 &=& \frac{k}{\phi}-\frac{\mu(1+\mu^2)}{\phi^3}\cdot
\end{eqnarray}
In addition, a direct computation gives 
\begin{equation}\label{eq-expression2}
 2b\mu\phi^2-(a\tau+b\mu)(1+\mu^2)=\phi\mu(1+\mu^2).
\end{equation}

From \eqref{eq-H}, \eqref{eq-bk-phi''mu}, and \eqref{eq-expression2}, we obtain
\begin{equation} \label{eq-Hagain}
H=\frac\rho\phi\frac{(\phi^2k-\mu(1+\mu^2))(h^2+r^2)+\mu(1+\mu^2)\phi^2}{2(\tau^2+h^2(1+\mu^2))}\cdot
\end{equation}

In the next result, we use the above expression of $H$ to ensure the existence of
one-parameter families of complete $h$-pitched helicoidal surfaces with
prescribed mean curvature functions on $\R^2$.

\begin{proposition} \label{th-prescribedH}
For any smooth function $\Psi\colon\R^2\rightarrow\R$ and any
constant $h>0,$ there exists a
one-parameter family of complete
helicoidal surfaces of pitch $h$ in $\h^2\times\R$,
each of them with mean curvature function $H$ satisfying
$$H(X(u,v))=\Psi(\tau(u),\mu(u)),$$
where $X$ is the parameterization given
in~\eqref{eq-parametrization} and $\tau$ and $\mu$
are as in~\eqref{eq-tau&mu}.
\end{proposition}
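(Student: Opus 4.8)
The plan is to read equation~\eqref{eq-Hagain} not as a formula computing $H$, but as an \emph{algebraic} equation for the curvature $k$ of the generating curve, and then to recover the curve itself by integrating an autonomous first-order system built from the Frenet relations~\eqref{eq-tau'&mu'}. The guiding observation is that, apart from $k$, every ingredient of~\eqref{eq-Hagain} is an explicit function of the pair $(\tau,\mu)$ and the fixed constant $h$.

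First I would make that observation precise. We have $r^2=\tau^2+\mu^2$ and $\phi=\sqrt{1+r^2}$, while $\phi'=\tau/\phi$ turns the expressions in~\eqref{eq-abc} into functions of $(\tau,\mu)$; a short computation gives $a^2+b^2-\mu^2+c^2=(1+\mu^2+\tau^2/h^2)/\phi^2>0$, so $\rho$ is smooth and positive. Imposing $H=\Psi(\tau,\mu)$ in~\eqref{eq-Hagain} then yields an equation that is linear in $k$, with leading coefficient
\[
\frac{\rho\,\phi\,(h^2+r^2)}{2\big(\tau^2+h^2(1+\mu^2)\big)}>0 .
\]
Since this coefficient never vanishes, I can solve uniquely for $k$, obtaining a smooth function $k=\kappa(\tau,\mu)$ on $\R^2$.

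Next I would substitute $k=\kappa(\tau,\mu)$ into~\eqref{eq-tau'&mu'} to get the autonomous system $\tau'=1+\kappa(\tau,\mu)\,\mu$, $\mu'=-\kappa(\tau,\mu)\,\tau$. By the Picard--Lindel\"of theorem each initial condition $(\tau(u_0),\mu(u_0))=(\tau_0,\mu_0)$ determines a unique local solution, and the resulting trajectories in the $(\tau,\mu)$-plane (with two initial points on the same trajectory producing the same curve up to reparameterization) constitute the asserted one-parameter family. Completeness comes from an a priori bound: whatever $\kappa$ is, the system gives $(r^2)'=2\tau\tau'+2\mu\mu'=2\tau$, whence the $\kappa$-terms cancel and $r$ is $1$-Lipschitz, growing at most linearly in $u$. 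Thus each solution stays in a compact set on every finite $u$-interval, on which $\kappa$ is bounded, so no solution blows up in finite time and all solutions extend to $u\in\R$.

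Finally, given a global solution $(\tau,\mu)$, I would set $k(u):=\kappa(\tau(u),\mu(u))$ and invoke the fundamental theorem of plane curves to produce a unit-speed curve $\alpha\colon\R\to\R^2$ with curvature $k$, using the rigid-motion freedom to arrange $\alpha(u_0)=\tau_0\,T(u_0)+\mu_0\,N(u_0)$. This consistency check is the step I expect to demand the most care, since the very $(\tau,\mu)$ I prescribed must be reproduced by the reconstructed curve. It does: by~\eqref{eq-tau'&mu'} the functions $\langle\alpha,T\rangle$ and $\langle\alpha,N\rangle$ satisfy the same linear system with the same coefficient $k(u)$ and the same initial data as $(\tau,\mu)$, so uniqueness forces $\langle\alpha,T\rangle=\tau$ and $\langle\alpha,N\rangle=\mu$. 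Putting $\phi:=\sqrt{1+r^2}$ then gives $\phi^2=1+\abs{\alpha}^2$, which confirms $\sigma=(\alpha,\phi)\in\h^2$ (and $\langle\sigma,e_3\rangle=-\phi<0$), while $\phi\phi'=\tau$ holds automatically; feeding $\sigma$ into~\eqref{eq-parametrization} yields a complete $h$-pitched helicoidal surface whose mean curvature equals $\Psi(\tau,\mu)$ by construction.
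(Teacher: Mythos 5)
Your argument is correct and takes essentially the same route as the paper: both read~\eqref{eq-Hagain} as an equation linear in $k$ with positive coefficient, solve for $k$ as a smooth function of $(\tau,\mu)$, and then produce the one-parameter family of complete generating curves. The only difference is that where the paper simply invokes Lemma 3.2 of~\cite{halldorsson} for the existence and completeness of plane curves whose curvature is a prescribed function of $(\tau,\mu)$, you reprove that lemma from scratch (the autonomous $(\tau,\mu)$-system, the Lipschitz bound on $r$ giving global solutions, and the reconstruction of $\alpha$ with the consistency check), which is precisely its content.
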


\begin{proof}
 From equalities \eqref{eq-phiandphi'}--\eqref{eq-abc},
 $\phi$ and $\rho$ are differentiable functions of $\tau$ and $\mu.$
 Hence, considering equality~\eqref{eq-Hagain} for the given function $H=H(\tau,\mu)$ and solving for
 $k$ (notice that the coefficient of $k$ in~\eqref{eq-Hagain} is positive), we have that
 $k=k(\tau,\mu)$ is a smooth function of $(\tau,\mu)\in\R^2.$
 Thus, we can apply~\cite[Lemma 3.2]{halldorsson} to conclude that there exists a
 {one-parameter} family $\mathscr F$
 of plane curves defined on the whole line $\R$, each of them
 with curvature function $k.$

 Therefore, given $h>0$, for any curve $\alpha:\R\rightarrow\R^2$ of $\mathscr F$,
 the $h$-pitched helicoidal surface of $\h^2\times\R$  which is determined by the curve
 $$\sigma=(\alpha,\phi), \,\,\, \phi=\sqrt{1+\|\alpha\|^2},$$
 has mean curvature function $H=H(\tau,\mu)$, as we wished to prove.
\end{proof}

Suppose that $\Sigma=X(\R^2)$ is a helicoidal surface of pitch $h>0$
as in \eqref{eq-parametrization}.
Then, $J\pi(X)=e^{vJ}J\alpha$ and $\eta$ {is given} as in \eqref{eq-normal}, so that
equality \eqref{eq-rotatorH2xR} takes the form:
\begin{equation} \label{eq-Hagain2}
H=\rho(b\tau-a\mu)=\frac{\rho\tau}{\phi}\cdot
\end{equation}

Therefore, since $\rho$ and $\phi$ are functions of $\tau$ and $\mu$, we have

\begin{lemma} \label{lem-conditionrotatorH2xR}
A helicoidal surface $\Sigma=X(\R^2)$ of pitch $h>0$ parameterized as in~\eqref{eq-parametrization}
is a rotator to {\rm MCF} in \,$\h^2\times\R$ if and only if its mean curvature function
$H=H(\tau,\mu)$ satisfies \eqref{eq-Hagain2}.
\end{lemma}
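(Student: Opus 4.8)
The plan is to reduce the defining equation of a rotator, namely~\eqref{eq-rotatorH2xR}, to an explicit identity in $\tau$ and $\mu$. Since~\eqref{eq-rotatorH2xR} states that $\Sigma$ is a rotator precisely when $H(p)=\langle J\pi(p),\eta(p)\rangle$ holds pointwise, the entire task amounts to evaluating the inner product $\langle J\pi(X),\eta\rangle$ along the helicoidal parameterization~\eqref{eq-parametrization} and identifying it with the right-hand side of~\eqref{eq-Hagain2}.

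First I would compute $J\pi(X)$. Projecting $X(u,v)=(e^{vJ}\alpha,\phi,hv)$ onto the $\R^2$ factor and applying $J$, and using that $J$ commutes with the rotation $e^{vJ}$, yields $J\pi(X)=(e^{vJ}J\alpha,0,0)$; crucially, this vector has vanishing third $\mathbb L^3$-coordinate and vanishing $\R$-coordinate. Pairing it with the unit normal $\eta=\rho(e^{vJ}(aT+bN),\mu,c)$ of~\eqref{eq-normal} through the ambient metric of $\mathbb L^3\times\R$, the timelike ($x_3$) and vertical ($t$) slots therefore contribute nothing, so the pairing collapses to $\rho$ times the Euclidean inner product of the two $\R^2$-components.

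Next I would exploit the algebraic structure of $J$. Because $e^{vJ}$ is a Euclidean isometry the factor $e^{vJ}$ cancels, leaving $\langle J\alpha,aT+bN\rangle$; and since $J$ is orthogonal and skew-symmetric with $N=JT$, one gets $\langle J\alpha,T\rangle=-\langle\alpha,N\rangle=-\mu$ and $\langle J\alpha,N\rangle=\langle\alpha,T\rangle=\tau$. Hence $\langle J\pi(X),\eta\rangle=\rho(b\tau-a\mu)$. Substituting $a=\mu\phi'$ and $b=(1+\mu^2)/\phi$ from~\eqref{eq-abc} and using $\phi\phi'=\tau$ from~\eqref{eq-phiandphi'}, the bracket simplifies to $b\tau-a\mu=\tau/\phi$, giving exactly~\eqref{eq-Hagain2}. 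As $\rho$ and $\phi$ are functions of $(\tau,\mu)$ alone, the rotator condition~\eqref{eq-rotatorH2xR} becomes precisely $H(\tau,\mu)=\rho\tau/\phi$, which is the asserted equivalence.

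In fact, once the preceding computations are in place, this statement is essentially a bookkeeping corollary rather than a substantial result, so I do not anticipate a genuine obstacle. The one point requiring care is the Lorentzian signature of $\mathbb L^3$: I would want to double-check that the $x_3$-slot really drops out of the pairing, which it does only because $J\pi(X)$ has no third $\mathbb L^3$-component. A secondary verification, already recorded in the text, is that the vector in~\eqref{eq-normal} is indeed a unit normal tangent to $\h^2\times\R$, so that the soliton equation~\eqref{eq-main} applies to it without modification.
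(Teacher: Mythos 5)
Your proposal is correct and follows essentially the same route as the paper: compute $J\pi(X)=e^{vJ}J\alpha$, pair it with the unit normal~\eqref{eq-normal} (the $x_3$- and $t$-slots dropping out since that component of $J\pi(X)$ vanishes), use the skew-symmetry of $J$ to get $\rho(b\tau-a\mu)$, and simplify via~\eqref{eq-abc} and $\phi\phi'=\tau$ to $\rho\tau/\phi$. The paper states this computation more tersely in the paragraph preceding the lemma, but the content is identical, including the closing observation that $\rho$ and $\phi$ depend only on $(\tau,\mu)$.
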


Now, we are in position to provide the

\begin{proof}[Proof of Theorem~\ref{th-main}]
The existence part of the statement follows directly from
Proposition~\ref{th-prescribedH} and Lemma~\ref{lem-conditionrotatorH2xR}.
To prove the asserted properties of the generating curve $\sigma=(\alpha,\phi),$
let us first observe that, by~\eqref{eq-Hagain} and~\eqref{eq-Hagain2},
the curvature of $\alpha$ is the function
\begin{equation} \label{eq-kproof}
k=\frac{2(\tau^2+h^2(1+\mu^2))\tau+(h^2-1)(1+\mu^2)\mu}{(1+r^2)(h^2+r^2)}\cdot
\end{equation}
Then, by combining equalities~\eqref{eq-tau'&mu'} and~\eqref{eq-kproof}, one concludes
that the functions $\tau$ and $\mu$ are solutions of the following ODE system:
\begin{equation} \label{eq-system007}
\left\{
\begin{aligned}
\tau'&=1+\frac{2(\tau^2+h^2(1+\mu^2))\tau\mu+(h^2-1)(1+\mu^2)\mu^2}{(1+r^2)(h^2+r^2)},\\
\mu'&=-\frac{2(\tau^2+h^2(1+\mu^2))\tau^2+(h^2-1)(1+\mu^2)\tau\mu}{(1+r^2)(h^2+r^2)}\cdot
\end{aligned}
\right.
\end{equation}

\begin{figure}[htbp]
\includegraphics[scale=.4]{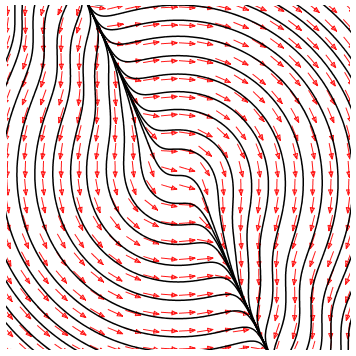}
\caption{\small Phase portrait of system \eqref{eq-system007} for $h=2.$}
\label{fig-phaseportrait3}
\end{figure}

We shall verify the properties of $\sigma$ by establishing the
asymptotic behavior of the solutions
$(\tau,\mu)$ of \eqref{eq-system007} as suggested in
Figure~\ref{fig-phaseportrait3}. This will
be done through the following chain of claims. We note that some of the arguments to prove such claims
are analogous to the ones presented in the proof of Theorem 4.4 of~\cite{dLRS}. Nonetheless,
they will be presented  for completeness and for the reader’s convenience as well.


\begin{claim} \label{claim-noconstantsolutionsH2xR}
The {\rm ODE} system~\eqref{eq-system007} has no constant solutions,
and all solutions are defined on $\R.$
\end{claim}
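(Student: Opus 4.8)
The plan is to read the system~\eqref{eq-system007} as the pair of relations $\tau'=1+k\mu$ and $\mu'=-k\tau$ (namely~\eqref{eq-tau'&mu'}) with the curvature $k=k(\tau,\mu)$ given explicitly by~\eqref{eq-kproof}; this is exactly how~\eqref{eq-system007} was derived. Since $1+r^2\ge 1$ and $h^2+r^2\ge h^2>0$ for all $(\tau,\mu)\in\R^2$, the denominator in~\eqref{eq-kproof} never vanishes, so $k$, and hence the right-hand side of~\eqref{eq-system007}, is a smooth (indeed rational) vector field on all of $\R^2$. By the Picard--Lindel\"{o}f theorem, through every point there passes a unique maximal solution. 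The key computation I would carry out first is the identity
\begin{equation*}
(r^2)'=2\tau\tau'+2\mu\mu'=2\tau(1+k\mu)-2\mu\,k\tau=2\tau,
\end{equation*}
in which the two $k$-terms cancel. This single identity drives both halves of the claim.

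For the absence of constant solutions, suppose $(\tau_0,\mu_0)$ were an equilibrium. Then $\tau'=\mu'=0$ there, so the identity above forces $2\tau_0=0$, i.e.\ $\tau_0=0$; it therefore suffices to check that the field never vanishes on the axis $\{\tau=0\}$. Setting $\tau=0$ in~\eqref{eq-kproof} gives $k=\dfrac{(h^2-1)\mu}{h^2+\mu^2}$, whence
\begin{equation*}
\tau'=1+k\mu=1+\frac{(h^2-1)\mu^2}{h^2+\mu^2}=\frac{h^2(1+\mu^2)}{h^2+\mu^2}>0,
\end{equation*}
contradicting $\tau'=0$. Hence~\eqref{eq-system007} has no equilibrium point, and in particular no constant solution.

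For global existence I would rule out finite-time blow-up. The cleanest route is to observe that the vector field of~\eqref{eq-system007} is bounded on $\R^2$: the numerator of $k$ in~\eqref{eq-kproof} is a cubic in $(\tau,\mu)$ while $(1+r^2)(h^2+r^2)>r^4$, so both $k\mu$ and $k\tau$ (quartic numerators over a denominator growing like $r^4$) remain bounded as $r\to\infty$ and are continuous elsewhere. Thus there is a constant $M=M(h)$ with $|\tau'|,|\mu'|\le M$ everywhere, so every maximal solution is Lipschitz and satisfies $|(\tau,\mu)(s)-(\tau,\mu)(s_0)|\le M|s-s_0|$; it cannot escape to infinity in finite time, and its maximal interval must be all of $\R$. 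Equivalently, one may invoke the identity $(r^2)'=2\tau$ together with $|\tau|\le r$ to get $|(r^2)'|\le 2\sqrt{r^2}$, and hence the at most linear growth $r(s)\le r(s_0)+|s-s_0|$ by a standard comparison argument. I expect the only real subtlety to be spotting the cancellation that yields $(r^2)'=2\tau$ (equivalently, checking that $k\mu$ and $k\tau$ stay bounded at infinity); once that is in hand, both assertions of the claim follow immediately.
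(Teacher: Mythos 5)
Your proof is correct and takes essentially the same approach as the paper: both arguments force $\tau_0=0$ at any would-be equilibrium (you via the cancellation $(r^2)'=2\tau$, the paper via $k_0\tau_0=0$ and $k_0\mu_0=-1$) and then derive the contradiction $\tau'=h^2(1+\mu_0^2)/(h^2+\mu_0^2)>0$ on the axis $\{\tau=0\}$. The one place you go beyond the paper is global existence, where the paper merely notes that $k$ is defined on all of $\R^2$ while you supply the ingredient actually needed — the right-hand side of \eqref{eq-system007} is a bounded vector field (quartic numerators over the quartic denominator $(1+r^2)(h^2+r^2)$), so solutions grow at most linearly and cannot escape to infinity in finite time.
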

\begin{proof}[Proof of Claim~\ref{claim-noconstantsolutionsH2xR}]
Let  us assume, by contradiction, that there exists a constant solution
$\psi(s)=(\tau_0,\mu_0), \, s\in\R.$ Since the derivatives of $\tau$ and 
$\mu$ vanish identically,  we have
from~\eqref{eq-tau'&mu'} that $k_0:=k(\tau_0,\mu_0)$ satisfies
$k_0\mu_0=-1$ and $k_0\tau_0=0,$ which yields $\tau_0=0$
and $\mu_0\ne0.$ But, from the first equation in~\eqref{eq-system007},
one has
\begin{equation} \label{eq-tau'}
\tau'=1+\frac{(h^2-1)\mu_0^2}{h^2+\mu_0^2}=\frac{h^2(1+\mu_0^2)}{h^2+\mu_0^2}>0,
\end{equation}
which is a contradiction. Hence, the system~\eqref{eq-system007} has no constant solutions.
From this fact, and since $k=k(\tau,\mu)$ is defined on $\R^2$, we have that any solution
of~\eqref{eq-system007}  is defined on $\R.$
\end{proof}


\begin{claim} \label{claim-reverselimt02}
Suppose that for any solution $\psi(s):=(\tau(s),\mu(s))$ of the system \eqref{eq-system007}, the limit
$\lim_{s\to+\infty}\tau(s)$ (resp. $\lim_{s\to+\infty}\mu(s)$) exists and satisfies:
\[
\lim_{s\to+\infty}\tau(s)=L \,\,\, (\text{resp.}\,\, \lim_{s\to+\infty}\mu(s)=L),
\]
where $L$ is independent of $\psi$ and $-\infty\le L\le+\infty.$ Then,
$\lim_{s\to-\infty}\tau(s)$ (resp. $\lim_{s\to-\infty}\mu(s)$) exists and satisfies:
\[
\lim_{s\to-\infty}\tau(s)=-L \,\,\, (\text{resp.}\,\, \lim_{s\to-\infty}\mu(s)=-L).
\]
\end{claim}
\begin{proof}[Proof of Claim~\ref{claim-reverselimt02}]
Let $\psi(s):=(\tau(s),\mu(s))$ be a solution
of the system~\eqref{eq-system007}. Then,
it is easily checked that $\overbar\psi(s):=-\psi(-s)$ is also
a solution of that system.
Setting $\overbar\psi=(\overbar\tau,\overbar\mu),$
we have that $\overbar\tau(s)=-\tau(-s)$
and $\overbar\mu(s)=-\mu(-s).$ By hypothesis,
$\lim_{s\to +\infty}\overbar\tau(s)$ exists and the first
part of the claim follows from noticing  that
$\lim_{s\to -\infty}\mu(s)
= -\lim_{s\to+\infty} \overbar\mu(s)$.
The remainder of the proof is  analogous and will
be omitted.
\end{proof}


\begin{claim} \label{claim-tauhasonezero02}
The function $\tau$ has precisely one zero $s_0,$
being negative in $(-\infty, s_0)$ and positive in
$(s_0,+\infty).$ As a consequence, the function
$r^2=\tau^2+\mu^2$ has a global minimum and satisfies
$\lim_{s\rightarrow\pm\infty}r^2=+\infty.$
\end{claim}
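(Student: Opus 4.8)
The plan is to extract a single scalar identity that governs the whole phase portrait: along any solution of \eqref{eq-system007} the squared radius $r^2=\tau^2+\mu^2$ satisfies $(r^2)'=2\tau$. This follows immediately from the Frenet relations \eqref{eq-tau'&mu'}, since $(r^2)'=2\tau\tau'+2\mu\mu'=2\tau(1+k\mu)+2\mu(-k\tau)=2\tau$. Thus the zeros of $\tau$ are exactly the critical points of $r^2$, and the sign of $\tau$ controls the monotonicity of $r^2$. I would then record the local behaviour at such a zero: if $\tau(s_*)=0$, then, by the computation already carried out in \eqref{eq-tau'}, $\tau'(s_*)=\dfrac{h^2(1+\mu(s_*)^2)}{h^2+\mu(s_*)^2}>0$. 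Hence every zero of $\tau$ is a transversal crossing from negative to positive values, which forbids two distinct zeros (between consecutive zeros $\tau$ would have to return to $0$ with nonpositive derivative). So $\tau$ has \emph{at most} one zero, and if it has one, say $s_0$, then $\tau<0$ on $(-\infty,s_0)$ and $\tau>0$ on $(s_0,+\infty)$.

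Everything now reduces to showing that $\tau$ has \emph{at least} one zero; this is the crux, and I expect it to be the main obstacle, since it is a global rather than a local statement. My plan is to first exclude periodic orbits and then invoke Poincaré--Bendixson. Differentiating the identity gives $(r^2)''=2\tau'$, so at any critical point of $r^2$ along a trajectory (where $\tau=0$) one has $(r^2)''=2\tau'>0$ by the previous step; thus every critical point of $r^2$ is a strict local minimum. A closed orbit would force $r^2$ to be periodic and hence to attain an interior maximum, which is impossible, so \eqref{eq-system007} has no periodic orbits. Suppose now, for contradiction, that $\tau>0$ on all of $\R$. Then $r^2$ is strictly increasing, so the backward orbit $\{(\tau(s),\mu(s)):s\le s_1\}$ is trapped in $0\le r^2\le r^2(s_1)$ and is therefore bounded; its $\alpha$-limit set is a nonempty compact invariant set containing no equilibria, since by Claim~\ref{claim-noconstantsolutionsH2xR} there are none, so Poincaré--Bendixson would produce a periodic orbit, a contradiction. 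The case $\tau<0$ on all of $\R$ is handled by the reflection symmetry used in Claim~\ref{claim-reverselimt02}: if $\psi=(\tau,\mu)$ is a solution then so is $\overbar\psi(s)=-\psi(-s)$, whose first coordinate $-\tau(-s)$ is everywhere positive, reducing to the previous case. Hence $\tau$ vanishes somewhere, and by the first paragraph exactly once, at a point $s_0$ with the stated sign pattern.

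Finally I would deduce the consequence from the same identity. Since $(r^2)'=2\tau$ is negative on $(-\infty,s_0)$ and positive on $(s_0,+\infty)$, the function $r^2$ strictly decreases and then strictly increases, so $s_0$ is its global minimum. For the behaviour at infinity, monotonicity gives $\lim_{s\to+\infty}r^2=L_+\in(r^2(s_0),+\infty]$; were $L_+$ finite, the forward orbit would be bounded, and Poincaré--Bendixson together with the absence of equilibria and of periodic orbits would again be contradicted, forcing $L_+=+\infty$. The limit as $s\to-\infty$ follows in the same way, or directly from the reflection symmetry, yielding $\lim_{s\to\pm\infty}r^2=+\infty$, as claimed.
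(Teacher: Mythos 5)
Your proposal is correct, and the first and last structural steps (the identity $(r^2)'=2\tau$, the transversality $\tau'>0$ at any zero of $\tau$, hence at most one zero and the sign pattern) coincide with the paper's. Where you genuinely diverge is in the global part --- forcing $\tau$ to vanish somewhere and forcing $r^2\to+\infty$. The paper argues directly and elementarily: if $\tau>0$ everywhere, then $r^2$ increases to a finite limit $\delta$ as $s\to-\infty$, whence $\tau\to 0$ and $\mu^2\to\delta$ along that end, and then the explicit formula for $\tau'$ in \eqref{eq-system007} shows $\lim_{s\to-\infty}\tau'(s)>0$, contradicting $\tau\to 0$; the same computation rules out finite limits of $r^2$ at either end. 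You instead exclude periodic orbits (every critical point of $r^2$ is a strict local minimum, so $r^2$ cannot be periodic) and then apply Poincar\'e--Bendixson to the bounded backward (resp.\ forward) orbit, using the absence of equilibria from Claim~\ref{claim-noconstantsolutionsH2xR}. Both routes work. Your argument is more conceptual and sidesteps the slightly delicate step of extracting $\lim_{s\to-\infty}\tau(s)=0$ from the convergence of $r^2$, at the cost of invoking planar dynamical-systems machinery; the paper's computation is more self-contained and, importantly, the asymptotic estimate $\lim\tau'=h^2(1+\mu^2)/(h^2+\mu^2)>0$ that drives it is recycled almost verbatim in the proofs of Claims~\ref{claim-limitsexist} and~\ref{claim-taumulimits}, so the authors get more mileage out of it. One small point to tidy in your write-up: in excluding closed orbits you should note separately that $r^2$ cannot be \emph{constant} along a periodic orbit (that would give $\tau\equiv 0$, contradicting $\tau'>0$ at zeros of $\tau$), since a constant function has no ``interior maximum'' to contradict.
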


\begin{proof}[Proof of Claim~\ref{claim-tauhasonezero02}]
Firstly, observe that the equalities~\eqref{eq-tau'&mu'} give
\[
(r^2)'=2(\tau\tau'+\mu\mu')=2(\tau(1+k\mu)+\mu(-k\tau))=2\tau,
\]
which implies that the zeroes of $\tau$ are the critical points
of $r^2$. Also, as seen in the first part of the proof of
Claim~\ref{claim-noconstantsolutionsH2xR},
if $\tau(s_0)=0$ for some $s_0,$ then $\tau'(s_0)>0,$
which implies that $\tau$ has at most one zero $s_0,$
in which case $\tau$ is
negative in $(-\infty, s_0),$ and positive in
$(s_0,+\infty).$

Now, arguing by contradiction, 
we assume that $\tau$ has no zeroes.
We will also assume that $\tau>0$ on $\R,$ since
the complementary case $\tau<0$ can be treated analogously.
Under this assumption, the function
$r^2$ is strictly increasing. So, there exists $\delta\ge 0$ such that
$$
\lim_{s\to -\infty}r^2(s)=\delta.
$$
In particular, since $\tau=\frac{(r^2)'}{2}$, we also have that
\begin{equation}\label{eqtaugoestozero}
\lim_{s\to -\infty}\tau(s)=0,
\end{equation}
which yields $\mu^2\rightarrow\delta$ as $s\rightarrow-\infty.$
However, the first equality in~\eqref{eq-system007} yields
$\lim_{s\rightarrow-\infty}\tau'(s)>0,$
which contradicts~\eqref{eqtaugoestozero}, proving
that $\tau$ has exactly one
zero and that $r^2$ has only one critical point. Consequently,
both the limits of
$r^2$ as $s\rightarrow\pm\infty$ exist in $[0,+\infty]$.

To finish the proof of the claim, we have just to observe  that
if either $\lim_{s\to -\infty}r^2=\delta$ or
$\lim_{s\to +\infty}r^2=\delta$ for some
$\delta>0$, the same arguments as before
lead to a contradiction. Hence,
$\lim_{s\to \pm\infty}r^2(s)=+\infty$.
\end{proof}


\begin{claim} \label{claim-kzeros}
The curvature  $k$ has at most one zero $s_1$. If so,
$k$ is negative in $(-\infty, s_1)$ and positive in
$(s_1,+\infty).$
\end{claim}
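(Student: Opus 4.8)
The plan is to reduce the claim to a single sign condition on $k'$ at the zeros of $k$. In the expression~\eqref{eq-kproof} the denominator $(1+r^2)(h^2+r^2)$ is strictly positive for every $s$, since $r^2\ge 0$ and $h>0$; hence the zeros and the sign of $k$ coincide with those of its numerator
\[
N:=2\bigl(\tau^2+h^2(1+\mu^2)\bigr)\tau+(h^2-1)(1+\mu^2)\mu.
\]
A direct sign analysis of $N$ is unappealing, because the coefficient $h^2-1$ changes sign according to whether $h<1$ or $h>1$. I would avoid it and instead establish that $k'(s_1)>0$ at every zero $s_1$ of $k$. This one inequality yields the entire statement: if $k(s_1)=0$, then $k'(s_1)>0$ forces $k<0$ just to the left of $s_1$ and $k>0$ just to the right; and $k$ can have no further zero, since the nearest zero to the right of $s_1$ would be approached from the left through positive values of $k$, forcing $k'\le 0$ there and contradicting $k'>0$ at a zero (and symmetrically to the left). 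Thus $s_1$ is the unique zero, and the asserted sign pattern follows.

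The key computation is to evaluate $k'$ at a zero. Writing $k=N/D$ with $D:=(1+r^2)(h^2+r^2)>0$, at a point $s_1$ with $k(s_1)=0$ one has $N(s_1)=0$, so that $k'(s_1)=N'(s_1)/D(s_1)$ and it suffices to show $N'(s_1)>0$. The decisive simplification comes from the relations~\eqref{eq-tau'&mu'}: at a zero of $k$ they reduce to $\tau'=1+k\mu=1$ and $\mu'=-k\tau=0$. Differentiating $N$ along the solution and substituting $\mu'=0$ and $\tau'=1$ annihilates every term carrying the factor $\mu'$, leaving only
\[
N'(s_1)=\frac{\partial N}{\partial\tau}(s_1)=6\,\tau(s_1)^2+2h^2\bigl(1+\mu(s_1)^2\bigr)>0.
\]
Hence $k'(s_1)>0$ at every zero, as needed.

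Assembling these observations completes the argument. The only genuine difficulty is conceptual rather than computational: one must recognize that the sign behavior of $k$ is best read off from $k'$ at its zeros rather than from the sign-indefinite numerator $N$. Once the relations~\eqref{eq-tau'&mu'} are invoked to set $\mu'=0$ and $\tau'=1$, the surviving term $\partial N/\partial\tau$ is manifestly positive and the troublesome factor $h^2-1$ never intervenes.
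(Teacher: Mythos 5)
Your proposal is correct and follows essentially the same route as the paper: both show that $k'(s_1)=\frac{6\tau^2(s_1)+2h^2(1+\mu^2(s_1))}{(1+r^2(s_1))(h^2+r^2(s_1))}>0$ at any zero $s_1$ of $k$ and deduce the sign pattern and uniqueness from that. Your write-up merely makes explicit the intermediate steps (using $\tau'=1$, $\mu'=0$ at a zero of $k$ to kill the $\partial N/\partial\mu$ term) that the paper compresses into ``by differentiating~\eqref{eq-kproof}.''
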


\begin{proof}[Proof of Claim~\ref{claim-kzeros}]
Assume that, for some $s_1\in\R,$ we have  $k(s_1)=0$.
Then, by differentiating~\eqref{eq-kproof}, we get
\[
k'(s_1)=\frac{6\tau^2(s_1)+2h^2(1+\mu^2(s_1))}{(1+r^2(s_1))(h^2+r^2(s_1))}>0,
\]
from which the claim clearly follows.
\end{proof}


\begin{claim} \label{claim-limitsexist}
The limits of $\tau$ and $\mu$ as $s\rightarrow\pm\infty$ exist (possibly being infinite).
\end{claim}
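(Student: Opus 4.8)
Recall from the proof of Claim~\ref{claim-reverselimt02} that $\overbar\psi(s):=-\psi(-s)$ solves~\eqref{eq-system007} whenever $\psi$ does, and that $\overbar\tau(s)=-\tau(-s)$, $\overbar\mu(s)=-\mu(-s)$. Hence $\lim_{s\to-\infty}\tau(s)=-\lim_{s\to+\infty}\overbar\tau(s)$ and similarly for $\mu$, so the existence of the limits as $s\to-\infty$ follows automatically once they are established as $s\to+\infty$ for every solution. The plan is therefore to treat only the limit $s\to+\infty$.

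First I would dispose of $\mu$, and of $\tau$ in the bounded case. Let $s_0$ be as in Claim~\ref{claim-tauhasonezero02} and $s_1$ as in Claim~\ref{claim-kzeros} (set $s_1=-\infty$ if $k$ never vanishes). For $s>\max\{s_0,s_1\}$ one has $\tau>0$ and $k$ of constant sign, so by the second identity in~\eqref{eq-tau'&mu'} the derivative $\mu'=-k\tau$ has constant sign; thus $\mu$ is eventually monotone and $\lim_{s\to+\infty}\mu=:\mu_\infty$ exists in $[-\infty,+\infty]$. If $\mu_\infty$ is finite, then, since $r^2\to+\infty$ by Claim~\ref{claim-tauhasonezero02} and $\tau>0$ for $s>s_0$, we get $\tau=\sqrt{r^2-\mu^2}\to+\infty$, and both limits exist.

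The substantial case is $\mu_\infty=\pm\infty$, in which the limit of $\tau$ cannot be read off from $r^2\to+\infty$. Here I would pass to polar coordinates $\tau=r\cos\theta$, $\mu=r\sin\theta$ and prove that the argument $\theta$ converges. From $(r^2)'=2\tau$ (obtained in the proof of Claim~\ref{claim-tauhasonezero02}) one gets $r'=\cos\theta$, while a direct computation from~\eqref{eq-tau'&mu'} gives $\theta'=-k-\mu/r^2$. Since $k$ is a ratio of a cubic over a quartic in $(\tau,\mu)$ by~\eqref{eq-kproof}, its leading term is $\kappa(\theta)/r$, and collecting the two contributions yields $\theta'=-g(\theta)/r+o(1/r)$ as $r\to+\infty$, where a short computation produces the factorization
\[
g(\theta)=(\cos^2\theta+h^2\sin^2\theta)(2\cos\theta+\sin\theta).
\]
The first factor is strictly positive, so for large $r$ the sign of $\theta'$ is that of $-(2\cos\theta+\sin\theta)$. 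I would then single out the attracting direction in the half-plane $\{\tau>0\}$, namely the angle $\theta^*\in(-\pi/2,0)$ with $\tan\theta^*=-2$ (so $\cos\theta^*=1/\sqrt5>0$, $\sin\theta^*=-2/\sqrt5<0$), by checking that $\theta'>0$ on $(-\pi/2,\theta^*)$ and $\theta'<0$ on $(\theta^*,\pi/2)$ once $r$ is large. As $\tau>0$ for $s>s_0$ confines $\theta$ to $(-\pi/2,\pi/2)$, a trapping argument forces $\theta\to\theta^*$; combined with $r\to+\infty$ this gives $\tau=r\cos\theta\to+\infty$ and $\mu=r\sin\theta\to-\infty$, so both limits exist (and, incidentally, $\mu_\infty=+\infty$ is ruled out).

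The main obstacle is precisely this last step: excluding that $(\tau,\mu)$ spirals indefinitely in the phase plane, i.e.\ that $\theta$ oscillates without converging. Crude size estimates do not suffice, because $|k\mu|$ is only of order one and therefore does not separate $\tau'=1+k\mu$ from zero; what must be exploited is the attracting structure of the angular equation near $\theta^*$, through an invariant-sector (trapping) estimate valid for $r$ large.
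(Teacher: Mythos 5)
Your reduction to $s\to+\infty$ via the symmetry $\overbar\psi(s)=-\psi(-s)$, your treatment of $\mu$ (eventual monotonicity, since $\mu'=-k\tau$ has at most two sign changes by Claims~\ref{claim-tauhasonezero02} and~\ref{claim-kzeros}), and the finite-$\mu_\infty$ case are all correct and coincide with the paper. Where you diverge is the hard case $\mu\to\pm\infty$: your polar-coordinate computations check out ($\theta'=-k-\mu/r^2$, and the factorization $g(\theta)=(\cos^2\theta+h^2\sin^2\theta)(2\cos\theta+\sin\theta)$ is right), and if this route were completed it would prove strictly more than the claim, since convergence $\theta\to\theta^*$ with $\tan\theta^*=-2$ pins down the limiting direction $\mu/\tau\to-2$ and would subsume Claims~\ref{claim-taumulimits} and~\ref{claim-tau/mulimited}.

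However, the step you yourself flag as the main obstacle is a genuine gap, not a routine verification. The statement ``$\theta'>0$ on $(-\pi/2,\theta^*)$ and $\theta'<0$ on $(\theta^*,\pi/2)$ once $r$ is large'' cannot hold uniformly, because $g(\theta^*)=0$ and the error term $o(1/r)$ is not sign-controlled near $\theta^*$; one must instead run an $\varepsilon$-sector argument (for each $\varepsilon$ there is $R_\varepsilon$ such that the sector $[\theta^*-\varepsilon,\theta^*+\varepsilon]$ is forward-invariant once $r>R_\varepsilon$), together with a separate argument that $\theta$ actually \emph{enters} the sector — which requires showing that $\int|\theta'|\,ds$ diverges while $\theta$ lingers where $g$ is bounded away from zero, and this in turn needs control of $\cos\theta$ near $\pm\pi/2$ (i.e.\ of the relative sizes of $\tau$ and $\mu$), which is essentially what is being proved. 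As written, the proposal therefore does not yet establish the claim.

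The paper sidesteps all of this with a short sequential argument. If $\lim_{s\to+\infty}\tau$ did not exist, then $\tau$ would cross some level $\tau_0>0$ along a sequence $s_n\to+\infty$ with $\tau'(s_n)$ of alternating sign. Since $\tau(s_n)=\tau_0$ is bounded and $r^2\to+\infty$ by Claim~\ref{claim-tauhasonezero02}, one gets $\mu(s_n)\to\pm\infty$ (using that $\lim\mu$ already exists), and then \eqref{eq-kproof} gives $k(s_n)\mu(s_n)\to h^2-1$, hence $\tau'(s_n)=1+k(s_n)\mu(s_n)\to h^2>0$, contradicting the alternation. This uses only the asymptotics of $k\mu$ along a sequence where $\tau$ stays bounded, and requires no global control of the phase angle; you may want to adopt it, and keep your angular analysis as a (separately justified) refinement identifying the limiting direction.
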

\begin{proof}[Proof of Claim~\ref{claim-limitsexist}]
It follows from Claims~\ref{claim-tauhasonezero02} and~\ref{claim-kzeros}
that $\mu'=-k\tau$ has at most two zeroes.
Thus, $\lim_{s\to\pm\infty}\mu$ are both well defined.

Concerning $\tau$,  assume by contradiction that its
limit as $s\rightarrow+\infty$ does not exist.
In this case, for some $\tau_0>0,$ there exists a
strictly increasing
sequence $(s_n)_{n\in\N}$ diverging to $+\infty$
such that (see~Fig.~\ref{fig-taugraph})
\[
\tau(s_n)=\tau_0 \quad\text{and}\quad \tau'(s_n)\tau'(s_{n+1})<0 \quad \forall n\in\N.
\]

\begin{figure}[htbp]
\includegraphics{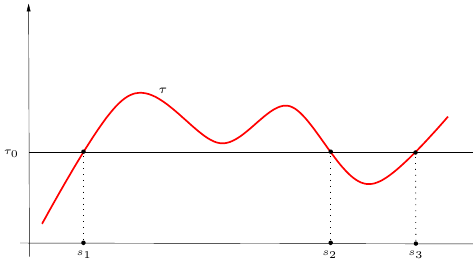}
\caption{\small Graph of $\tau.$}
\label{fig-taugraph}
\end{figure}

Claim~\ref{claim-tauhasonezero02} implies that
$\lim r^2(s_n) = +\infty$, and so
$\lim\mu^2(s_n)= +\infty$.
In this case, our previous
arguments show that either
$\lim\mu(s_n)= +\infty$ or
$\lim\mu(s_n)= -\infty$.
However, we have from~\eqref{eq-kproof} that
(consider the highest powers of $\mu(s_n)$ only)
$$\lim(k(s_n)\mu(s_n))=
\lim\frac{(h^2-1)\mu(s_n)^4}{\mu(s_n)^4}=h^2-1,$$
which, together with~\eqref{eq-tau'&mu'}, yields
$$\lim\tau'(s_n)=\lim(1+k(s_n)\mu(s_n))=h^2>0.$$

It follows from the above inequality that,  for any sufficiently large $n\in\N,$
$\tau'(s_n)$ is positive, which contradicts the fact that $(\tau'(s_n))_{n\in\N}$
is an alternating sequence.
Therefore, $\lim_{s\to+\infty}\tau(s)$ exists.
Since $(\tau,\mu)$ is an arbitrary solution
of~\eqref{eq-system007}, Claim~\ref{claim-reverselimt02}
implies that $\lim_{s\to-\infty}\tau(s)$ also exists,
thereby finishing the proof of the claim.
\end{proof}

\begin{claim} \label{claim-taumulimits}
$\displaystyle\lim_{s\to \pm\infty}\tau(s)=\pm\infty$ \,and\, $\displaystyle\lim_{s\to \pm\infty}\mu(s)=\mp\infty.$
\end{claim}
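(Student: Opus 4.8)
The plan is to identify the four limits one at a time, first as $s\to+\infty$ and then deduce the behavior as $s\to-\infty$ from Claim~\ref{claim-reverselimt02}. Throughout I would use that, by Claim~\ref{claim-limitsexist}, the limits $\lim_{s\to+\infty}\tau$ and $\lim_{s\to+\infty}\mu$ already exist in $[-\infty,+\infty]$, so the only task is to pin down their values. I would begin by showing $\lim_{s\to+\infty}\tau=+\infty$. By Claim~\ref{claim-tauhasonezero02}, $\tau$ is positive for $s>s_0$, so the limit is nonnegative; arguing by contradiction, suppose it equals a finite number $\tau_\infty$. Since Claim~\ref{claim-tauhasonezero02} also gives $r^2=\tau^2+\mu^2\to+\infty$, boundedness of $\tau$ forces $\mu^2\to+\infty$, and as $\lim_{s\to+\infty}\mu$ exists this means $\mu\to+\infty$ or $\mu\to-\infty$. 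In either case the exact computation already carried out in the proof of Claim~\ref{claim-limitsexist} (keeping only the highest powers of $\mu$ in~\eqref{eq-kproof}) gives $k\mu\to h^2-1$, whence $\tau'=1+k\mu\to h^2>0$ by~\eqref{eq-tau'&mu'}. This is incompatible with $\tau$ tending to a finite limit, so $\lim_{s\to+\infty}\tau=+\infty$.

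Next I would show $\lim_{s\to+\infty}\mu=-\infty$. Since $\tau>0$ for $s>s_0$ and, by Claim~\ref{claim-kzeros}, $k>0$ for $s>s_1$, the identity $\mu'=-k\tau$ in~\eqref{eq-tau'&mu'} shows that $\mu$ is strictly decreasing for $s>\max\{s_0,s_1\}$; in particular its limit cannot be $+\infty$. It remains to rule out a finite limit. If $\mu$ were bounded, then using $\tau\to+\infty$ in the second equation of~\eqref{eq-system007}, the numerator is dominated by the term $2\tau^4$ and the denominator by $\tau^4$, so that $\mu'\to-2<0$, which again contradicts $\mu$ approaching a finite value. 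Hence $\lim_{s\to+\infty}\mu=-\infty$.

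Finally, since the two limits just computed, $+\infty$ for $\tau$ and $-\infty$ for $\mu$, are the same for every solution $\psi$, Claim~\ref{claim-reverselimt02} applies (with $L=+\infty$ and $L=-\infty$, respectively) and yields $\lim_{s\to-\infty}\tau=-\infty$ and $\lim_{s\to-\infty}\mu=+\infty$, completing the proof. I expect the main obstacle to be the asymptotic bookkeeping in the first step: one must check that the leading behavior $k\mu\to h^2-1$ holds along the entire trajectory (not merely on a sequence, as in Claim~\ref{claim-limitsexist}) and independently of the sign of $\mu$, which is precisely what makes the two-term numerator of~\eqref{eq-kproof} collapse to the single dominant power $(h^2-1)\mu^4$ against the denominator $\sim r^4\sim\mu^4$.
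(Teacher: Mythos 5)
Your overall strategy matches the paper's (pin down the $s\to+\infty$ limits for an arbitrary solution, then invoke Claim~\ref{claim-reverselimt02}), and two of your three steps are sound: the argument that a finite $\lim_{s\to+\infty}\tau$ forces $|\mu|\to\infty$ and hence $\tau'=1+k\mu\to h^2>0$ is exactly the paper's computation (done last there, first by you, which is harmless), and the elimination of a finite $\lim_{s\to+\infty}\mu$ via $\mu'\to-2$ is also the paper's argument.

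The genuine gap is in your exclusion of $\lim_{s\to+\infty}\mu=+\infty$. You write that ``by Claim~\ref{claim-kzeros}, $k>0$ for $s>s_1$,'' but Claim~\ref{claim-kzeros} only says that $k$ has \emph{at most} one zero, and that \emph{if} such a zero $s_1$ exists then $k>0$ on $(s_1,+\infty)$. It does not rule out that $k$ is everywhere negative, in which case $\mu'=-k\tau>0$ for $s>s_0$ and $\mu$ is increasing — entirely consistent with $\mu\to+\infty$. Nor does your already-established $\tau\to+\infty$ rescue you via formula~\eqref{eq-kproof}: when $h<1$ and $\mu$ grows much faster than $\tau$, the term $(h^2-1)(1+\mu^2)\mu$ dominates the numerator and $k$ is negative. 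This is precisely where the paper does real work: assuming $\mu\to+\infty$, it uses Claim~\ref{claim-reverselimt02} to get $\mu\to-\infty$ as $s\to-\infty$, observes that if $k<0$ everywhere then $s_0$ is a strict local minimum of $\mu$ (since $\mu'(s_0)=0$ and $\mu''(s_0)=-k(s_0)\tau'(s_0)>0$ by~\eqref{eq-tau'}), so $\mu$ must have a local maximum at some $s_1<s_0$, forcing $k(s_1)=0$ — a contradiction that shows $k$ does have a zero and is therefore eventually positive. With that supplement your proof closes; without it, the case of everywhere-negative curvature is unaddressed. (Incidentally, the obstacle you flagged — the asymptotics $k\mu\to h^2-1$ along the whole trajectory — is not an issue, since that computation only needs $\tau$ bounded and $|\mu|\to\infty$.)
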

\begin{proof}[Proof of Claim~\ref{claim-taumulimits}]
By Claim~\ref{claim-limitsexist}, all the limits above exist. 
Regarding the function $\mu$, assume by contradiction that 
$\lim_{s\to +\infty}\mu(s) = L\in \R$.
Under this assumption, we have from
Claims~\ref{claim-tauhasonezero02} and~\ref{claim-limitsexist} that
$\lim_{s\to +\infty} \tau(s)=+\infty$.
Then, it follows from the second equation
in~\eqref{eq-system007} that
$\lim_{s\to +\infty}\mu'(s)= -2\ne 0$, which
contradicts that $L\in\R$.

Suppose now that $\lim_{s\to+\infty}\mu(s)=+\infty$.
Then, there exists  $\bar s\in\R$ such that $k>0$ on $(\bar s,+\infty)$.
Indeed, assuming otherwise, we have from  Claim~\ref{claim-kzeros}
that $k$ must be strictly negative in $(-\infty,+\infty)$. In this case,
considering the unique zero $s_0$ of $\tau$
(cf.~Claim~\ref{claim-tauhasonezero02}), we have that
\[
\mu'(s_0)=-k(s_0)\tau(s_0)=0 \quad\text{and}\quad \mu''(s_0)=-k(s_0)\tau'(s_0)>0,
\]
where, in the last inequality, we used~\eqref{eq-tau'}.
Also,
since $(\tau,\mu)$ is an arbitrary integral curve
of~\eqref{eq-system007}, we have from Claim~\ref{claim-reverselimt02} that
$\lim_{s\to-\infty}\mu(s)=-\infty$, which implies that
$\mu$ must have a local maximum at some point $s_1<s_0$.
Therefore, $$0=\mu'(s_1)=-k(s_1)\tau(s_1),$$ which yields
$k(s_1)=0$, since $s_0$ is the unique zero of $\tau$.
This contradicts our hypothesis on $k$, proving the existence
of $\bar s$ as asserted. However, for any point 
$s\in (s_2,+\infty)$, where $s_2 = \textrm{max} \{ s_0, \bar s \}$,
one has $\mu'(s)=-k(s)\tau(s)<0,$ which contradicts our assumption on $\mu$.
Thus, $\lim_{s\to+\infty}\mu(s)=-\infty$ and,
from Claim~\ref{claim-reverselimt02}, $\lim_{s\to-\infty}\mu(s)=+\infty.$

Finally, suppose that $0\le\lim_{s\to+\infty}\tau(s)=L<+\infty.$
Then, $\lim_{s\to+\infty}\tau'(s)=0.$ But, considering that
$\mu$ has infinite limit as $s\to+\infty$, a computation as in the
final part of the proof of Claim~\ref{claim-limitsexist} gives
that $\lim_{s\to+\infty}\tau'(s)= h^2>0$, which is
a contradiction. This,  together with Claim~\ref{claim-reverselimt02},
shows that $\lim_{s\to\pm\infty}\tau(s)=\pm\infty$.
\end{proof}

\begin{claim} \label{claim-tau/mulimited}
The function $\nu:=-\tau/\mu$ is bounded outside of a compact interval.
\end{claim}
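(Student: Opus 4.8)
The plan is to derive a clean scalar equation for $\nu$ that forces it to decrease whenever it grows large, and then to run a trapping argument, handling $s\to-\infty$ by the reversal symmetry. First I would differentiate $\nu=-\tau/\mu$ and use the Frenet relations \eqref{eq-tau'&mu'} rather than the bulky system: since $\tau'=1+k\mu$ and $\mu'=-k\tau$, one gets $\tau\mu'-\tau'\mu=-k\tau^2-\mu-k\mu^2=-\mu-kr^2$, whence
\begin{equation}\label{eq-nuprime}
\nu'=\frac{\tau\mu'-\tau'\mu}{\mu^2}=\frac{-\mu-kr^2}{\mu^2}.
\end{equation}

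By Claim~\ref{claim-taumulimits}, as $s\to+\infty$ one has $\tau\to+\infty$ and $\mu\to-\infty$; in particular $\nu>0$ and $\mu<0$ for $s$ large, so I may write $\tau=-\nu\mu$ and $r^2=\mu^2(1+\nu^2)$. Substituting these together with the expression \eqref{eq-kproof} for $k$ into \eqref{eq-nuprime}, a direct simplification (collecting powers of $\mu^2$ and using $r^2=\mu^2(1+\nu^2)$) should yield
\begin{equation}\label{eq-nuprime2}
\nu'=-\frac{1}{\mu}\,\frac{1}{(1+r^2)(h^2+r^2)}\left(\frac{r^4\,p(\nu)}{1+\nu^2}+2h^2(1-\nu)\,r^2+h^2\right),\qquad p(\nu):=-2\nu^3+\nu^2-2h^2\nu+h^2.
\end{equation}
Since $-1/\mu>0$ and the denominator is positive for $s$ large, the sign of $\nu'$ equals that of the bracket, so everything reduces to controlling that bracket.

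Next I would analyze the bracket. As $p(0)=h^2>0$ and $p(\nu)\to-\infty$, the polynomial $p$ has a largest real root $\nu^*$, and $p<0$ on $(\nu^*,+\infty)$. Fixing $\nu_0>\max\{\nu^*,1\}$, for every $\nu\ge\nu_0$ both $-p(\nu)/(1+\nu^2)>0$ and $1-\nu<0$; moreover $-p(\nu)/(1+\nu^2)\sim 2\nu\to+\infty$, so a compactness argument on $[\nu_0,\infty)$ shows that $c_0:=\inf_{\nu\ge\nu_0}\bigl(-p(\nu)/(1+\nu^2)\bigr)$ is a positive constant. Hence, for $\nu\ge\nu_0$, the bracket is at most $-c_0r^4+h^2$. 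Because $r^2\to+\infty$ by Claim~\ref{claim-tauhasonezero02}, there exists $S>0$ with $-c_0r^4+h^2<0$ for all $s>S$, and therefore $\nu'(s)<0$ whenever $s>S$ and $\nu(s)\ge\nu_0$.

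Finally I would trap $\nu$: setting $M:=\max\{\nu_0,\nu(S)\}$, I claim $\nu\le M$ on $[S,+\infty)$, for if $\nu$ exceeded $M$ first past some $s_2$ with $\nu(s_2)=M\ge\nu_0$, then $\nu>\nu_0$ to the right of $s_2$ forces $\nu'<0$ there, contradicting the increase. With $\nu>0$ this bounds $\nu$ on $[S,+\infty)$. For $s\to-\infty$ I would use the symmetry of Claim~\ref{claim-reverselimt02}: the reflected solution $\overbar\psi(s)=-\psi(-s)$ has associated function $\overbar\nu(s)=\nu(-s)$, so applying the bound just obtained to $\overbar\psi$ gives boundedness of $\nu$ on $(-\infty,-S]$. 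Thus $\nu$ is bounded outside $[-S,S]$. The main obstacle I anticipate is the passage to \eqref{eq-nuprime2} and the subsequent sign estimate, which must be made uniform in $\nu$ so as to cover at once the regime where $\nu$ stays bounded and the regime where $\nu$ might itself grow; isolating the dominant factor $r^4p(\nu)/(1+\nu^2)$ and discarding the $O(r^2)$ and $O(1)$ terms is exactly what the constant $c_0$ and the limit $r^2\to+\infty$ are there to justify.
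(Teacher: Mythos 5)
Your proposal is correct, and its engine is the same as the paper's: both start from the identity $\nu'=-(\mu+kr^2)/\mu^2$ (the paper's \eqref{eq-derivativemu/tau}) and both conclude by showing that $\nu'$ must be negative wherever $\nu$ is large and $s$ is large, which prevents $\nu$ from blowing up. The execution, however, differs in a way worth noting. The paper argues by contradiction along a sequence: it picks $s_n\to+\infty$ with $\nu(s_n)\to+\infty$ and (without loss of generality) $\nu'(s_n)>0$, then uses the asymptotics $\tau\to+\infty$, $\mu\to-\infty$, $\mu/\tau\to0$ together with \eqref{eq-kproof} to see that $\mu+kr^2\to+\infty$ along the sequence, forcing $\nu'(s_n)<0$ for large $n$. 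You instead substitute $\tau=-\nu\mu$, $r^2=\mu^2(1+\nu^2)$ into \eqref{eq-kproof} and obtain the exact identity in which the sign of $\nu'$ is governed by $r^4p(\nu)/(1+\nu^2)+2h^2(1-\nu)r^2+h^2$ with $p(\nu)=-2\nu^3+\nu^2-2h^2\nu+h^2$ (I checked the algebra; it is right), and then run a barrier/trapping argument using the uniform bound $p(\nu)/(1+\nu^2)\le-c_0<0$ for $\nu\ge\nu_0$. What your version buys is a sign estimate that is uniform in $\nu$ on $[\nu_0,\infty)$, so you never need to extract a sequence along which both $\nu$ is large and $\nu'>0$ --- the one mildly informal step in the paper's argument --- and the trapping lemma then closes the proof cleanly. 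What it costs is the explicit computation of $p(\nu)$, which the paper's softer ``dominant term'' argument avoids. The treatment of $s\to-\infty$ via the reflection $\overbar\psi(s)=-\psi(-s)$ and $\overbar\nu(s)=\nu(-s)$ matches Claim~\ref{claim-reverselimt02} and is exactly what the paper does.
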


\begin{proof}[Proof of Claim~\ref{claim-tau/mulimited}]
It follows from Claim~\ref{claim-taumulimits} that $\nu$ is well defined and positive at any point
outside of a compact interval of $\R.$ Moreover, at such a point, one has
\begin{equation}\label{eq-derivativemu/tau}
 \nu'=-\frac{\mu+kr^2}{\mu^2}\cdot
\end{equation}

Now, assume by contradiction that there exists a sequence $(s_n)_{n\in\N}$ in $\R$ diverging to infinity, 
such that $\lim\nu(s_n)=+\infty.$ We can also assume, without loss of generality, that
$\nu'(s_n)>0\,\forall n\in\N.$ However, considering~\eqref{eq-kproof}, Claim~\ref{claim-taumulimits},
and the fact that $\lim(-\mu(s_n)/\tau(s_n))=0$, we easily conclude that
\[
\lim(\mu(s_n)+k(s_n)r^2(s_n))=+\infty. 
\]
Thus, for all sufficiently large $n$,
$\nu'(s_n)<0$, which is contradicts  our hypothesis.

Analogously, we derive a contradiction by assuming that there exists
$s_n\rightarrow-\infty$ such that $\nu(s_n)\rightarrow+\infty.$
This proves Claim~\ref{claim-tau/mulimited}.
\end{proof}

In what follows, we shall denote by $\omega=\omega(s)$ the angle function of
$\alpha,$ i.e.,
\[
\alpha=r(\cos\omega,\sin\omega).
\]
It then follows from~\eqref{eq-tau'&mu'} that the equality
\begin{equation} \label{eq-Tandomega'}
T=\frac{\tau}{r^2}\alpha+\omega'J\alpha
\end{equation}
holds at any point where $r\ne 0.$

\begin{claim} \label{claim-infiniteangle}
The angle function $\omega$ of $\alpha$ satisfies
$\lim_{s\to\pm\infty}\omega(s)=+\infty.$
\end{claim}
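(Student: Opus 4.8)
The plan is to convert the statement about the total turning of $\alpha$ into a statement about the growth of $r$, by changing the independent variable from arc length $s$ to $r$. The logarithmic divergence of $\int^{+\infty} dr/r$ will then force $\omega\to+\infty$, with the boundedness of $\nu$ from Claim~\ref{claim-tau/mulimited} doing the essential work.

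First I would extract an explicit formula for $\omega'$ from~\eqref{eq-Tandomega'}. Since $\{\alpha,J\alpha\}$ is an orthogonal pair with $\langle\alpha,\alpha\rangle=\langle J\alpha,J\alpha\rangle=r^2$ and $T$ is unit, taking $\langle T,T\rangle=1$ in~\eqref{eq-Tandomega'} gives $1=\tau^2/r^2+(\omega')^2 r^2$, whence $(\omega')^2=\mu^2/r^4$. To pin down the sign, I would apply $J$ to~\eqref{eq-Tandomega'} to get $N=JT=-\omega'\alpha+(\tau/r^2)J\alpha$ and pair it with $\alpha$, obtaining $\mu=\langle\alpha,N\rangle=-\omega' r^2$, i.e.
\[
\omega'=-\frac{\mu}{r^2}.
\]
By Claim~\ref{claim-taumulimits}, $\mu<0$ for $s$ large positive and $\mu>0$ for $s$ large negative, so $\omega'$ is eventually positive on the first arm and eventually negative on the second; in both cases $\omega$ is monotone near the relevant end, consistent with $\omega\to+\infty$.

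The key step is the change of variables. For $s>s_0$ (the unique zero of $\tau$ from Claim~\ref{claim-tauhasonezero02}) one has $\tau>0$, hence $r'=\tau/r>0$, so $r$ is a strictly increasing smooth function of $s$; moreover $r\to+\infty$, again by Claim~\ref{claim-tauhasonezero02}. Thus I may regard $\omega$ as a function of $r$ on the relevant range and compute, by the chain rule,
\[
\frac{d\omega}{dr}=\frac{\omega'}{r'}=\frac{-\mu/r^2}{\tau/r}=\frac{-\mu}{\tau\,r}=\frac{1}{\nu\,r},
\]
where $\nu=-\tau/\mu$ is the function of Claim~\ref{claim-tau/mulimited}. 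The very same identity holds for $s<s_0$, where $\tau<0$ makes $r$ strictly decreasing in $s$, so that $r\to+\infty$ as $s\to-\infty$ while $\nu$ stays positive.

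Finally I would invoke Claim~\ref{claim-tau/mulimited}: outside a compact interval $\nu$ is positive and bounded, say $0<\nu\le C$. Then $d\omega/dr\ge 1/(Cr)$, and integrating in $r$ up to $+\infty$ yields a logarithmically divergent lower bound for $\omega$, so $\omega\to+\infty$ as $r\to+\infty$ along either arm, which is precisely $\lim_{s\to+\infty}\omega=+\infty$ and $\lim_{s\to-\infty}\omega=+\infty$. I expect the only delicate points to be the correct determination of the sign of $\omega'$ and the verification that $r$ is monotone on each arm (so that the substitution is legitimate); the divergence itself is then the elementary $\int^{+\infty}dr/r=+\infty$, and the genuine analytic content was already secured by the boundedness of $\nu$ in Claim~\ref{claim-tau/mulimited}.
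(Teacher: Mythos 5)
Your proof is correct and follows essentially the same route as the paper: both convert the problem to the variable $r$ via $r'=\tau/r$ and $\omega'=-\mu/r^2$, and both let the boundedness of $\nu=-\tau/\mu$ from Claim~\ref{claim-tau/mulimited} combine with a divergent logarithmic integral to force $\omega\to+\infty$ on each arm. The only cosmetic difference is that you integrate $d\omega/dr\ge 1/(Cr)$ directly, whereas the paper compares $\omega$ with $\varphi=\log(\log r)$ by showing $d\varphi/d\omega\to 0$.
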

\begin{proof}[Proof of Claim~\ref{claim-infiniteangle}]
Considering \eqref{eq-Tandomega'} and the equality $(r^2)'=2\tau,$ one has
\[
r'=\frac{\tau}{r} \quad\text{and}\quad \omega'=-\frac{\mu}{r^2}\,\cdot
\]
Hence, given a differentiable function $\varphi=\varphi(r),$ $r\in (0,+\infty),$ its derivative with respect to 
$\omega$ can be written as 
\begin{equation} \label{eq-derivativevarphiH3}
\frac{d\varphi}{d\omega}=\frac{d\varphi}{dr}\frac{dr}{ds}\frac{ds}{d\omega}=-r\varphi'(r)\frac{\tau}{\mu}\cdot
\end{equation}

Next, define $\varphi(r)=\log(\log r).$ Then, $\varphi(r)\rightarrow+\infty$ as $r\rightarrow+\infty$ and
\begin{equation} \label{eq-rvarphi'H3}
r\varphi'(r)=\frac1{\log r}\rightarrow 0 \,\,\, \text{as} \,\,\, r\rightarrow+\infty.
\end{equation}
Since, by Claim~\ref{claim-tau/mulimited}, $-\tau/\mu$ is bounded outside of a compact interval,
it follows from Claim~\ref{claim-tauhasonezero02}
and~\eqref{eq-derivativevarphiH3}--\eqref{eq-rvarphi'H3} that ${d\varphi}/{d\omega}\rightarrow 0$
as $s\rightarrow\pm\infty.$ Thus,
${d\omega}/d\varphi\rightarrow+\infty$ as $s\rightarrow\pm\infty,$
which proves Claim~\ref{claim-infiniteangle}.
\end{proof}

It follows from the above claims that
the trace of $\alpha$ has one point $p_0$ closest to the origin
(Claim \ref{claim-tauhasonezero02}),
and consists of two properly embedded arms centered at
$p_0$ which proceed to infinity by spiraling around the origin
(Claim~\ref{claim-limitsexist}).
In particular, each arm of $\alpha$ gives rise to an embedded arm of the generating curve $\sigma=(\alpha,\phi)$
of $\Sigma,$ which spirals around the $x_3$-axis. In addition,
from Claim \ref{claim-tauhasonezero02}, we have that
$\phi^2=1+r^2\rightarrow+\infty$ as $s\rightarrow\pm\infty,$ which implies that
both arms of $\sigma$ have infinite height, being therefore properly embedded (Fig.~\ref{fig-generatingcurve}).
This concludes our proof. 
\end{proof}

\begin{figure}
\includegraphics[scale=.7]{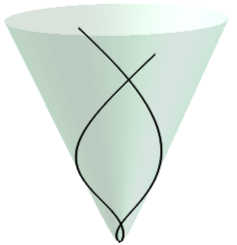}
\caption{\small Generating curve of a $1$-pitched helicoidal rotator-translator of $\h^2\times\R$
in the hyperboloid model of $\h^2$.}
\label{fig-generatingcurve}
\end{figure}

\begin{proof}[Proof of Theorem~\ref{th-translator}]
Consider a helicoidal surface  $\Sigma=X(\R^2)$ of pitch $h>0$ in $\h^2\times\R$
as given in \eqref{eq-parametrization}, and let
$\mathcal G=\{\Gamma_t\,;\, t\in\R\}\subset{\rm Iso}(\h^2\times\R)$ be the group of
downward vertical translations of constant speed $h,$ i.e.,
$\Gamma_t(p)=\exp_p(-th\partial_t),$ where $\exp$ denotes the exponential
map of $\h^2\times\R,$ and $\partial_t$ is the gradient of the height function
of $\h^2\times\R,$ namely $(p,t)\in\h^2\times\R\mapsto t\in\R.$

In the above setting, one has
\[
\frac{\partial\Gamma_t}{\partial t}(p)=d\exp_p(-th\partial_t)(-h\partial_t),
\]
so that the Killing field on $\h^2\times\R$ determined by $\mathcal G$ is
$\xi:=-h\partial_t.$ Then, considering the unit normal to $\Sigma$
as given in \eqref{eq-normal}, we have
\begin{equation*}
 \langle\xi(X),\eta\rangle = -h\rho c=\rho\frac{\tau}\phi,
\end{equation*}
which implies that $\Sigma$ is a $\mathcal G$-soliton if and only if
its mean curvature function is given by
$H=\rho{\tau}/\phi.$ From this and Lemma~\ref{lem-conditionrotatorH2xR},
the result follows.
\end{proof}


\begin{thebibliography}{99}

\bibitem{dLRS} R. F. de Lima, A. K. Ramos, J. P. dos Santos: Solitons to Mean Curvature Flow  in the hyperbolic $3$-space.
Preprint available at https://arxiv.org/abs/2307.14136.
\vt
\bibitem{halldorsson}
H. P. Halldorsson: Helicoidal surfaces rotating/translating under the mean curvature flow, Geom. Dedicata {\bf 162} (2013), 45--65.
\vt
\bibitem{hungerbuhler-smoczyk} N. Hungerbühler, K. Smoczyk: Soliton solutions for the mean curvature flow.
Differ Integr. Equ. {\bf 13}, (2000) 1321--1345.
\vt
\bibitem{pipoli} G. Pipoli: Invariant translators of the Heisenberg group.
 J. Geom. Anal. {\bf 31}, (2021) 5219--5258.
\end{thebibliography}
\end{document}